\newtheorem{lemma}{Lemma}[section]
\newtheorem{theorem}{Theorem}[section]
\newtheorem{remark}{Remark}[section]
\newtheorem{corollary}{Corollary}[section]
\numberwithin{equation}{section}
\begin{document}
\title[Boundary layer for nonhomogeneous incompressible fluids]{Boundary layer associated with a class of 3D nonlinear plane parallel channel flows for nonhomogeneous incompressible Navier-Stokes equations}
\thanks{{\it Keywords}: Boundary layer; plane parallel channel flows; nonhomogeneous incompressible Navier-Stokes equations; Prandtl theory.}
\thanks{{\it AMS Subject Classification}: 76N10, 35Q30, 35R35}%
\author[Shijin Ding, Zhilin Lin, Dongjuan Niu]{Shijin Ding, Zhilin Lin$^*$, Dongjuan Niu}
\address[S. Ding]{South China Research Center for Applied Mathematics and Interdisciplinary Studies, South China Normal University,
Guangzhou, 510631, China}\address{School of Mathematical Sciences, South China Normal University,
Guangzhou, 510631, China}
\email{dingsj@scnu.edu.cn}
\address[Corresponding author: Z. Lin]{School of Mathematical Sciences, South China Normal University,
Guangzhou, 510631, China}
\email{zllin@m.scnu.edu.cn}
\address[D. Niu]{School of Mathematical Sciences, Capital Normal University,
Beijing, 100048, China}
\email{djniu@cnu.edu.cn}
\date{\today}

\begin{abstract}
In this paper, we establish the mathematical validity of the Prandtl boundary layer theory for a class of nonlinear plane parallel flows of nonhomogeneous incompressible Navier-Stokes equations. The convergence is shown under various Sobolev norms, including the physically important space-time uniform norm, as well as the $L^\infty(H^1)$ norm. It is mentioned that the mathematical validity of the Prandtl boundary layer theory for nonlinear plane parallel flow is generalized to the nonhomogeneous case.
\end{abstract}

\maketitle

\vspace{-5mm}

\section{Introduction}

In this paper, we consider the boundary layer of nonlinear plane parallel channel flows for the nonhomogeneous incompressible fluids in a three-dimensional slab domain, periodic in horizontal $x$ and $y$ directions, $\Omega=\mathbb{T}^2 \times [0,1](\mathbb{T}=[0,L])$ with the boundaries $\partial{\Omega}=\{z=i\}, i=0,1$. The motion of the incompressible fluids in $\Omega$ is governed by the following nonhomogeneous incompressible Navier-Stokes equations
\begin{equation}\label{1.1}
\left \{
\begin{array}{lll}
\partial_t \rho^\varepsilon +\mathrm{div}(\rho^\varepsilon u^\varepsilon)=0,\\
\partial_t (\rho^\varepsilon u^\varepsilon)+\mathrm{div}(\rho^\varepsilon u^\varepsilon \otimes u^\varepsilon)-\varepsilon \Delta u^\varepsilon +\nabla p^\varepsilon =\rho^\varepsilon f,\\
\mathrm{div}\ u^\varepsilon =0,
\end{array}
\right.
\end{equation}
where $u^\varepsilon(t;x,y,z), \rho^\varepsilon(t;x,y,z), p^\varepsilon(t;x,y,z)$ and $f\in\mathbb{R}^3$ are the velocity fields, density, pressure and external force, respectively. The positive constant $\varepsilon$ is the viscosity coefficient.

It is well known that the Navier-Stokes equations are equipped with the following no-slip boundary condition and initial data
\begin{equation}\label{1.2}
\left \{
\begin{array}{lll}
u^\varepsilon=0 \ \ \textrm{on} \ \ {\partial{\Omega}},\\
(\rho^\varepsilon,u^\varepsilon)|_{t=0}=(\rho_0,u_0), \  \rho_0 >0.
\end{array}
\right.
\end{equation}

Letting $\varepsilon = 0$, we arrive at the following nonhomogeneous incompressible Euler equations
\begin{equation}\label{1.3}
\left \{
\begin{array}{lll}
\partial_t \rho^0 +\mathrm{div}(\rho^0 u^0)=0,\\
\partial_t (\rho^0 u^0)+\mathrm{div}(\rho^0 u^0 \otimes u^0)+\nabla p^0 =\rho^0 f,\\
\mathrm{div}\ u^0 =0,
\end{array}
\right.
\end{equation}
with the following no-penetration boundary conditions and the same initial data
\begin{equation}\label{1.4}
\left \{
\begin{array}{lll}
u^0 \cdot n=0 \ \ \textrm{on} \ \ {\partial{\Omega}},\\
(\rho^0,u^0)|_{t=0}=(\rho_0,u_0), \  \rho_0 >0,
\end{array}
\right.
\end{equation}
in which $n$ is the unit outward normal to the boundaries.

In addition, we suppose that the initial datum of the density is away from vacuum, that is, there holds that
\begin{equation}\label{1.5}
\rho_0 \geq c_0>0
\end{equation}
for some constant $c_0$. Then by the classical theory of Navier-Stokes equations and Euler equations, one easily deduces that
\begin{equation}\label{1.6}
\rho^\varepsilon(t;x,y,z) \geq c_0>0 , \   \rho^0(t;x,y,z) \geq c_0>0
\end{equation}
for any time $t \geq 0$.

In this paper, our aim is to study the boundary layer for \textbf{(nonlinear) plane parallel channel flows}. In homogeneous case, this problem had been studied in \cite{Mazzucato,XWang}. In this paper, we will investigate the nonhomogeneous case. Precisely, we intend to look for the solutions of the equation (\ref{1.1}) of the form
\begin{equation}\label{1.61}
\rho^\varepsilon=\rho^\varepsilon(t;z), \ u^\varepsilon=(u^\varepsilon_1(t;z),u^\varepsilon_2(t;x,z),0)
\end{equation}
in an infinitely long horizontal channel. Moreover we suppose that the domain is periodic in horizontal $x$ and $y$ directions. Therefore we reduce to consider the problem in the domain $Q=[0,L]^2 \times [0,1]$, in which $L$ is the horizontal period, see Figure \ref{1.611} for instance. Obviously, the flows of the form (\ref{1.61}) automatically satisfy the divergence-free condition, i.e., $\textrm{div}\ u^\varepsilon=0$.

\begin{figure}[htbp]\label{1.611}
  \centering
  \includegraphics[width=0.66\textwidth]{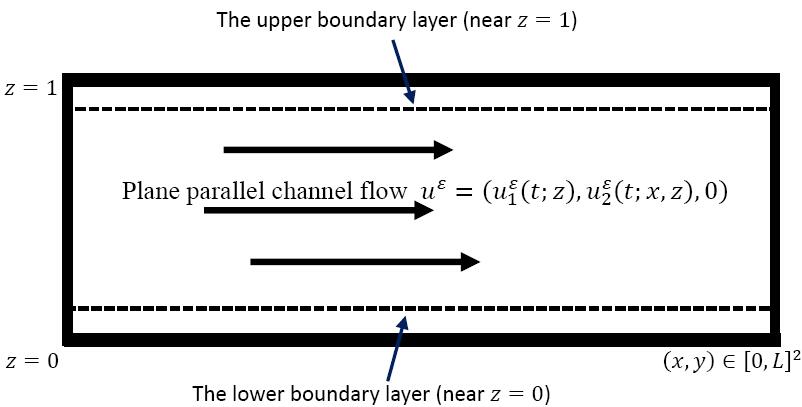}
  \caption{ The plane parallel channel flow in $Q=[0,L]^2 \times [0,1]$}
\end{figure}

The special structure of the solution is preserved by both Navier-Stokes equations and Euler equations if the initial data $(\rho_0,u_0)$ satisfy the same ansatz, i.e.,
\begin{equation}\label{1.62}
u^\varepsilon|_{t=0}=u_0=(a(z),b(x,z),0), \ \rho^\varepsilon|_{t=0}=\rho_0(z).
\end{equation}
Indeed, we have
$$\rho^\varepsilon\equiv \rho_0(z).$$
The equations for $u^\varepsilon_1$ is a heat equation (only depends on $z$) and that for $u^\varepsilon_2$ is a linear equation (depends on $x,z$). Therefore, in (\ref{1.64}), one can easily check that the structure of the flow can be preserved, i.e.,
$$u^\varepsilon=(u^\varepsilon_1(t;z),u^\varepsilon_2(t;x,z),0).$$
For more about the symmetry of solution to the Navier-Stokes equations in homogeneous fluids, see \cite{bardos} for details.

Let us denote the solution of Navier-Stokes equations by $u^\varepsilon$ with the viscosity $\varepsilon$ and that of Euler equations by $u^0$. For the Navier-Stokes equations, we impose the following boundary conditions
\begin{equation}\label{1.63}
u^\varepsilon|_{z=i}=\alpha^i(t;x),
\end{equation}
where $\alpha^i(t;x)=(\beta^i_1(t),\beta^i_2(t;x),0), i=0,1$.

It is easy to see that the solutions (\ref{1.61}) satisfy
\begin{equation}\label{1.64}
\left \{
\begin{array}{lll}
\partial_t\rho^\varepsilon =0,\\
\rho^\varepsilon \partial_t u^\varepsilon_1-\varepsilon \partial_z^2 u^\varepsilon_1=\rho^\varepsilon f_1,\\
\rho^\varepsilon \partial_t u^\varepsilon_2-\varepsilon \Delta_{x,z} u^\varepsilon_2+\rho^\varepsilon u^\varepsilon_1\partial_x u^\varepsilon_2=\rho^\varepsilon f_2,
\end{array}
\right.
\end{equation}
in $(x,z) \in [0,L] \times [0,1]$. Note that the plane parallel flows are three-dimensional actually. In addition, we will assume that the initial data, boundary data satisfy certain compatibility conditions. Recall that the zero-order compatibility conditions with the form
\begin{equation}\label{1.65}
\alpha^i(0;x)=u_0(x,i), \ \ i=0,1,
\end{equation}
and the first-order compatibility conditions
\begin{equation}\label{1.66}
\left \{
\begin{array}{lll}
\partial_t\rho^\varepsilon(0;i)=0,\\
\rho_0(i)\partial_t \beta^i_1(0)-\varepsilon \partial_z^2 a(i)=\rho_0(i)f(0;i),\\
\rho_0(i)\partial_t \beta^i_2(0;x)+\rho_0(x,i)\beta^i_1(0)\partial_x \beta^i_1(0;x)\\
\quad \quad \quad \quad \quad \quad \quad \quad -\varepsilon \Delta_{x,z} b(x,i)=\rho_0(i)f_2(0;x,i),
\end{array}
\right.
\end{equation}
where $\Delta_{x,z}:=\partial_{xx}+\partial_{zz}$ and $i=0,1$.

The well-posedness of the system (\ref{1.64}) can be easily obtained because of the weak coupling in (\ref{1.64}). For instance, one can get that $(\rho^\varepsilon,u^\varepsilon) \in L^\infty(H^1) \times L^\infty(H^1)$ and $( \rho^\varepsilon_t,\sqrt{\rho^\varepsilon}u^\varepsilon_t ) \in L^\infty(L^2) \times L^\infty(L^2)$ provided that $\rho_0 \in H^1 \cap L^\infty, u_0 \in H^2 \cap H^1, \alpha^i \in H^1, f\in L^\infty(0,T;H^1)$. We do not address this point in details here, and refer for example to \cite{Kim,Kim1,Simon,Wen} for interested readers.

By formally taking $\varepsilon = 0$, the Navier-Stokes equations become the corresponding Euler equations. Under the plane-parallel assumption, the Euler system reduces to the following weakly nonlinear equations
\begin{equation}\label{1.67}
\left \{
\begin{array}{lll}
\partial_t\rho^0 =0,\\
\rho^0\partial_t  u^0_1=\rho^0f_1,\\
\rho^0\partial_t  u^0_2+\rho^0u^0_1 \partial_x u^0_2=\rho^0f_2.
\end{array}
\right.
\end{equation}
We take the same initial data for both Euler and Navier-Stokes equations
\begin{equation}\label{1.68}
(\rho^0,u^0)|_{t=0}=(\rho_0(z),u_0(x,z)).
\end{equation}
Moreover, the solutions of (\ref{1.67}) -- (\ref{1.68}) can be obtained by solving a simple ODE and two linear transport equations thanks to $\rho_0 \geq c_0>0$.
Therefore the solutions are regular provided the initial data are regular enough. For example, if $(\rho_0,u_0) \in H^m(\Omega)\times H^m(\Omega)$ and $f \in L^\infty(0,T;H^m)$ for $m>5$, then $(\rho^0,u^0) \in C(0,T;H^m(\Omega))\times C(0,T;H^m(\Omega))$. The interested readers can see \cite{Marsden,Veiga1,Veiga2} for more details.

It should be pointed out that our aim is to justify the validity of the boundary layer expansion but not the regularity of the solutions. Therefore the solutions in our assumptions are regular enough, if we need.

Let us mention that
\begin{equation}\label{1.68a}
\rho^\varepsilon(t;z)=\rho^0(t;z)\equiv \rho_0(z),
\end{equation}
which is resulted from (\ref{1.64}) and (\ref{1.67}) with the same initial data (\ref{1.68}). Formally, (\ref{1.68a}) implies that there is no difference between the viscous density and ideal density, therefore there is no boundary layer for density. Indeed, we can verify this fact by performing the boundary layer expansions, see Section \ref{sec2}.

Now let us introduce some related results. As we know, the study of the behavior for the fluid with small viscosity (or large Reynold number) is an important topic in mathematics and physics. In 1905, Prandtl \cite{Prandtl} first introduced the concept of boundary layers and deduced the Prandtl equations with the no-slip boundary condition, which adheres to the strong boundary layer. According to the idea of Prandtl \cite{Prandtl}, there is a thin boundary layer of width of the order $\sqrt{\varepsilon}$ near the boundary. More precisely, the solutions to the Navier-Stokes equations with no-slip boundary conditions is expected to take the form
\begin{equation}\label{1.7}
\textrm{Navier--Stokes}\simeq \textrm{Euler}+\textrm{Prandtl \ layer}+O(\sqrt{\varepsilon}).
\end{equation}
Therefore, there are at least two fundamental problems to be investigated:

(a) The well-posedness of the Prandtl equations;

(b) The justification of (\ref{1.7})(or the validity of the boundary layer expansion).

For the Prandtl equation, there are lots of results to deal with the well-posedness or ill-posedness. Up to now, the well-posedness of Prandtl equation was proved only in some special functional spaces. As early as in 1963, Oleinik firstly obtained the local existence of classical solutions in 2D under the monotonic assumption on the tangential velocity, see \cite{Oleinik,Oleinik1} for instance. In the works of Oleinik \cite{Oleinik,Oleinik1}, she introduced the Crocco transform to reduce the Prandtl equation to some classical parabolic equations, which can be solved by some standard methods of PDEs. The well-posedness in the Sobolev spaces by applying energy method was obtained independently in \cite{Alexander,Masmoudi}. By imposing a favorable condition on the pressure, Xin and Zhang \cite{Xin2} proved the global existence of the Prandtl equation. The ill-posedness of Prandtl equation was obtained by G\'{e}rard-Varet and Dormy \cite{Gerard1}. Moreover, some of these results are generalized to the 3D case, see \cite{CLiu,CLiu1} for details. Recently, the well-posedness of the MHD boundary layer equations in Sobolev spaces without monotonicity was shown by Liu, Xie and Yang \cite{CLiu2}.

To justify the validity of the Prandtl expansion, one needs to study the convergence from the viscous solution to the inviscid solution as $\varepsilon \to 0$. However, there are few results about this topic. For the steady case, Guo and Nguyen \cite{YGuo2} proved the validity of the steady boundary layer expansion on a moving boundary and Iyer \cite{Iyer1} studied the similar problem over a rotating disk. Later, G\'{e}rard-Varet and Maekawa \cite{Gerard2} obtained the Sobolev stability of Prandtl expansions for the steady Navier-Stokes equations. Recently, without the moving boundary condition of \cite{YGuo2}, Guo and Iyer \cite{YGuo1} proved the validity of steady Prandtl layer expansion under an assumption for the normal velocity of first order Euler correction.

For the time-dependent case, Sammartino and Caflisch \cite{Sammartino1,Sammartino2} obtained the local existence of analytic solutions to the Prandtl equations and a rigorous theory on the stability of boundary layers in incompressible flow confined in the half space with analytic data. Mazzucato, Niu and Wang \cite{Mazzucato} established the validity of the boundary layer theory for the 3D plane parallel channel flows. With the  similar ideas, Han, Mazzucato, Niu and Wang \cite{Han} proved the validity of the boundary layer expansion for  nonlinear pipe flow. Maekawa \cite{Maekawa} studied the inviscid limit problem of the vorticity equations. For the analytic data, Wang, Wang and Zhang \cite{Wang} developed an energy method to justify the zero-viscosity limit for the analytic data in $\mathbb{R}^2_+$. With the basic idea and methods, Fei, Tao and Zhang \cite{Fei} considered the zero-viscosity limit of the incompressible Navier-Stokes equations with no-slip boundary condition in $\mathbb{R}^3_+$ and generalized Maekawa's result \cite{Maekawa} to 3D case. Recently, in homogeneous case, the convergence results without the compatibility conditions for some symmetric flows were established by Gie et al. \cite{Gie}. In addition, the validity of the boundary layer theory for incompressible MHD was obtained by Liu, Xie and Yang \cite{CLiu3}.

In this paper, we study the validity of the Prandtl theory associated with a special type plane parallel channel flows for nonhomogeneous Navier-Stokes equations. More about this special type flows for homogeneous fluids can be found in \cite{XWang}. Therefore one should also construct the approximate solution for viscous solutions by the boundary layer expansion. Our main result (see Theorem \ref{thm1}) provides some error bounds for the approximation of the Navier-Stokes equations given by the Euler solution plus the upper boundary and lower boundary correctors. Therefore the convergence rates in vanishing viscosity limit are obtained. Moreover, the optimal convergence rates in Sobolev norm $L^2$ were also obtained. Compared with the results in \cite{Gie}, since we impose the compatibility conditions, our results provide the convergence rates of the higher order for error solutions.
In order to obtain the error estimates and convergence rates, we need to establish some weighted estimates (or decay estimates) for the correctors. To ensure that the convergence rates hold true from two parts: the remainders and the weighted estimates. However, the convergence rates can not be improved in some sense due to the conditions of known data and structures of the remainders.

Now, our main results are stated as follows.
\begin{theorem}\label{thm1}
Suppose that $u_0 \in H^m(\Omega), \rho_0 \in H^m(\Omega)$ and there exists $c_0>0$ such that $c_0 \leq \rho_0 $. In addition, assume that the external force and boundary data satisfy $f \in L^\infty(0,T;H^m(\Omega)), \beta^i \in H^2(0,T; H^m(\partial{\Omega})),i=0,1, m>5$ and the compatibility conditions (\ref{1.65})-(\ref{1.66}). Then there exist positive constants $C>0$, independent of $\varepsilon$, such that for any solution $(\rho^\varepsilon,u^\varepsilon)$ of (\ref{1.64}) with the initial values $(\rho_0,u_0)$ and boundary values $\beta^i$,
\begin{equation}\label{1.73}
\Vert u^\varepsilon-\tilde{u}^a \Vert_{L^\infty(0,T;L^2(\Omega))} \leq C \varepsilon^{\frac{3}{4}},
\end{equation}
\begin{equation}\label{1.74}
\Vert u^\varepsilon-\tilde{u}^a \Vert_{L^\infty(0,T;H^1(\Omega))} \leq C \varepsilon^{\frac{1}{4}},
\end{equation}
\begin{equation}\label{1.75}
\Vert u^\varepsilon-\tilde{u}^a \Vert_{L^\infty((0,T) \times \Omega))} \leq C \sqrt{\varepsilon},
\end{equation}
where $\tilde{u}^a$ are defined by (\ref{3.1}) in Section \ref{sec3}.
\end{theorem}
Meanwhile, we have the following optimal convergence rate result.
\begin{corollary}\label{cor1}
Under the assumptions of Theorem \ref{thm1}, the following optimal convergence rate holds
\begin{equation}\label{1.77}
\begin{array}{lll}
C \varepsilon^{\frac{1}{4}} \leq \Vert u^\varepsilon-u^0 \Vert_{L^\infty(0,T;L^2(\Omega))} \leq C \varepsilon^{\frac{1}{4}},
\end{array}
\end{equation}
where $u^0$ is the solution of Problem (\ref{1.67})-(\ref{1.68}) and the constants $C>0$ are independent of $\varepsilon$.
\end{corollary}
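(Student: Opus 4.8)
The plan is to derive both inequalities from Theorem \ref{thm1} together with an explicit computation of the boundary layer correctors. The upper bound is the easy direction: by definition, $\tilde{\rho}^a = \rho^0 + (\text{density correctors})$ and $\tilde{u}^a = u^0 + (\text{velocity correctors})$, where the correctors are concentrated in layers of width $\sqrt{\varepsilon}$ near $z=0$ and $z=1$. Hence $\Vert \tilde{\rho}^a - \rho^0 \Vert_{L^\infty(0,T;L^2(\Omega))} \le C\varepsilon^{1/4}$ and likewise for the velocity, simply because an $L^\infty$-bounded profile supported in a slab of thickness $O(\sqrt{\varepsilon})$ has $L^2$ norm $O(\varepsilon^{1/4})$. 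Combining this with the triangle inequality and estimates \eqref{1.71}, \eqref{1.73} of Theorem \ref{thm1} (which give $O(\varepsilon^{3/4})$, hence negligible compared to $\varepsilon^{1/4}$), we obtain
\begin{equation}\label{corproof1}
\Vert (\rho^\varepsilon-\rho^0,u^\varepsilon-u^0) \Vert_{L^\infty(0,T;L^2(\Omega))} \le \Vert (\rho^\varepsilon-\tilde{\rho}^a,u^\varepsilon-\tilde{u}^a) \Vert_{L^\infty(0,T;L^2)} + \Vert (\tilde{\rho}^a-\rho^0,\tilde{u}^a-u^0) \Vert_{L^\infty(0,T;L^2)} \le C\varepsilon^{1/4}.
\end{equation}

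For the lower bound, the idea is to isolate the dominant corrector and show it cannot be cancelled. The natural candidate is the corrector for $u_1$, which solves a heat equation $\rho^0 \partial_t \theta_1 - \varepsilon \partial_z^2 \theta_1 = 0$ (in the stretched variable) enforcing the mismatch between the Euler trace $a(i)$ and the Navier--Stokes boundary data $\beta_1^i(t)$ at $z=i$. Provided this mismatch is generically nonzero — i.e. the zeroth-order compatibility \eqref{1.65} does not force $a(i)=\beta_1^i(t)$ for all $t$, which it does not — the self-similar profile $\theta_1$ has $L^2$ norm bounded below by $c\,\varepsilon^{1/4}$ for $t$ in a subinterval of $(0,T)$. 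Then I would write $u^\varepsilon - u^0 = (u^\varepsilon - \tilde u^a) + (\tilde u^a - u^0)$ and use the reverse triangle inequality: $\Vert u^\varepsilon - u^0\Vert_{L^\infty(0,T;L^2)} \ge \Vert \tilde u^a - u^0 \Vert_{L^\infty(0,T;L^2)} - \Vert u^\varepsilon - \tilde u^a\Vert_{L^\infty(0,T;L^2)} \ge c\varepsilon^{1/4} - C\varepsilon^{3/4} \ge c'\varepsilon^{1/4}$ for $\varepsilon$ small. One must check that the other correctors (for $\rho$ and for $u_2$), which are of the same order $\varepsilon^{1/4}$ in $L^2$, do not conspire to cancel the $u_1$-corrector; but this is automatic since they live in different components of $(\rho,u)$, so it suffices to look at the $u_1$ component alone.

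The main obstacle — really the only substantive point — is the lower bound on $\Vert \theta_1 \Vert_{L^2}$: one needs to verify that the prescribed data genuinely produces a nontrivial layer, i.e. that the construction in Section \ref{sec3} does not accidentally have vanishing leading corrector. This is a matter of exhibiting one admissible choice of $(\rho_0,u_0,\beta^i)$ (or arguing for generic data) with $a(0)\ne \beta_1^0(t)$ on a time interval, then invoking the maximum principle / explicit error-function representation for the heat equation to get the pointwise-in-$z$ lower bound on the profile near the boundary, and integrating. Everything else is bookkeeping with the triangle inequality and the width-$\sqrt{\varepsilon}$ scaling already used in the proof of Theorem \ref{thm1}. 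I would therefore structure the write-up as: (1) recall $\tilde\rho^a,\tilde u^a$ from \eqref{3.1} and the $O(\varepsilon^{1/4})$ size of correctors in $L^2$; (2) prove \eqref{corproof1}; (3) prove the matching lower bound via the $u_1$-corrector and the reverse triangle inequality.
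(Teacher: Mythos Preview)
Your proposal is correct and takes essentially the same approach as the paper: the paper's entire argument is Remark~\ref{rk1/4}, which asserts that Corollary~\ref{cor1} is ``straightforward from'' the fact that $\Vert (\sigma^0,\theta^0_1,\theta^0_2) \Vert_{L^\infty (0,T;L^2)} \thickapprox \varepsilon^{1/4}$, and you have correctly unpacked the triangle/reverse-triangle inequality behind this. You also flag the implicit nondegeneracy hypothesis (that the boundary data actually produce a nontrivial layer), which the paper leaves unstated.
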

In addition, similar to that in \cite{XWang}, the following Kato-type conclusion also holds.
\begin{corollary}\label{cor2}
Under the assumptions of Theorem \ref{thm1}, there exists positive constants $C>0$ independent of $\varepsilon$ such that for any $\delta \in (0,1)$ such that $\delta/\varepsilon \to \infty$ as $\varepsilon \to 0$,
\begin{equation}\label{1.79}
\begin{array}{lll}
\Vert u^\varepsilon-u^0 \Vert_{L^\infty(0,T;H^1(\Omega^\delta))} \leq C \varepsilon^{\frac{1}{4}},
\end{array}
\end{equation}
\begin{equation}\label{1.790}
\begin{array}{lll}
\Vert u^\varepsilon-u^0 \Vert_{L^\infty((0,T) \times \Omega^\delta))} \leq C \sqrt{\varepsilon},
\end{array}
\end{equation}
where $\Omega^\delta:=[0,L]\times [\delta,1-\delta]$.
\end{corollary}

\begin{remark}\label{rk1/4}
The results of Corollary \ref{cor1} are straightforward from the fact that
$$\Vert (\theta^0_1,\theta^0_2) \Vert_{L^\infty (0,T;L^2(\Omega_\infty))} \thickapprox \varepsilon^{\frac{1}{4}},$$
where $(\theta^0_1,\theta^0_2)$ are determined in Section \ref{sec2} and $\Omega_\infty=[0,L]\times [0,\infty)$.
\end{remark}

\begin{remark}
The proof of Corollary \ref{cor2} follows from the error estimates and it is similar to \cite{XWang}. For simplicity, we omit it here.
\end{remark}

\begin{remark}
It is well known that the boundary layer is  resulted from the mismatch of the boundary conditions for the velocity fields.
In other words, for the nonhomogeneous fluids, when we take the above special form of (\ref{1.61}), one can deduce that $\rho^\varepsilon=\rho^0\equiv\rho_0(z)$. It implies that there is no boundary layer for density, which is very reasonable.
\end{remark}

The rest of this paper is organized as follows. Section \ref{sec2} is devoted to formal asymptotic expansion of this type of flows at small viscosity. In Section \ref{sec3}, we will construct an approximate solution to Navier-Stokes equations utilizing the solution to the Prandtl-type effective systems and the solution to Euler equations. The main estimates and convergence rates are obtained in Section \ref{sec4}. Some decay estimates for the correctors are provided in Appendix \ref{ap}.

Throughout our paper, we use $C$ to denote a generic constant, independent of $\varepsilon$, that may depend on some initial data. In addition, we denote
$$\langle Z \rangle:=\sqrt{1+|Z|^2}$$
and
$$\langle Z^u \rangle:=\sqrt{1+|Z^u|^2},$$
which will be used later in this paper.

\section{The Prandtl-type effective equations for the correctors}\label{sec2}

The approach to a rigorous boundary layer analysis that we take is to derive the equations for the correctors, which is the difference between the Navier-Stokes solutions $(\rho^\varepsilon,u^\varepsilon,0)$ and the Euler solution $(\rho^0,u^0,0)$, where we have taken $p^\varepsilon=p^0 \equiv 0$ without loss of generality. First we recall that
$$\rho^0(t;z)\equiv \rho_0(z).$$
 We consider the approximate solutions as follows
\begin{equation}\label{2.1}
\left \{
\begin{array}{lll}
\rho^{a}&:=\rho^{\textrm{ou}}(t;z)+\sigma^0\left(t;\frac{z}{\sqrt{\varepsilon}}\right)
+\sigma^{u,0}\left(t;\frac{1-z}{\sqrt{\varepsilon}}\right),\\
u^{a}_1&:=u^{\textrm{ou}}_1(t;z)+\theta^0_1\left(t;\frac{z}{\sqrt{\varepsilon}}\right)
+\theta^{u,0}_1\left(t;\frac{1-z}{\sqrt{\varepsilon}}\right),\\
u^{a}_2&:=u^{\textrm{ou}}_2(t;x,z)+\theta^0_2\left(t;x,\frac{z}{\sqrt{\varepsilon}}\right)
+\theta^{u,0}_2\left(t;x,\frac{1-z}{\sqrt{\varepsilon}}\right),
\end{array}
\right.
\end{equation}
where $(\rho^{\text{ou}},u^{\text{ou}})$ are the outer solutions, and $(\sigma^0,\theta^0_1,\theta^{0}_2),$ $(\sigma^{u,0},\theta^{u,0}_1,\theta^{u,0}_2)$ are the lower correctors and upper correctors, respectively. The correctors satisfy
\begin{equation}\label{2.2}
\left \{
\begin{array}{lll}
(\sigma^0,\theta^0_1,\theta^{0}_2) \to (0,0,0) \ \ &\textrm{as} \ Z:=\frac{z}{\sqrt{\varepsilon}} \to \infty,\\
(\sigma^{u,0},\theta^{u,0}_1,\theta^{u,0}_2)\to (0,0,0)  \ \ &\textrm{as} \ Z^u:=\frac{1-z}{\sqrt{\varepsilon}} \to \infty.
\end{array}
\right.
\end{equation}
It is easy to see that $(\sigma^0,\theta^0_1,\theta^{0}_2),(\sigma^{u,0},\theta^{u,0}_1,\theta^{u,0}_2)$ are defined in $\Omega_\infty:=[0,L]\times [0,\infty).$ Then the outer solutions and correctors satisfy respectively:

\textbf{(I) The outer solution:}

The outer solution $(\rho^{\text{ou}},u^{\text{ou}})$ satisfy the Euler equations (\ref{1.67}) with the initial data
\begin{equation}\label{2.21}
(\rho^0,u^0)|_{t=0}=(\rho_0,u_0).
\end{equation}
The uniqueness of the solutions to (\ref{1.67}) yields that $\rho^{\text{ou}}\equiv \rho^0\equiv\rho_0(z), u^{\text{ou}}\equiv u^0$.

\textbf{(II) The lower corrector $(\sigma^0,\theta^0_1,\theta^{0}_2)$:}

For this part, first we note that for the density, we have
$$\partial_t \sigma^0=0, \ \ \sigma^0|_{t=0}=0.$$
Therefore we have $\sigma^0\equiv 0$, which implies that there is no boundary layer for the density.

Furthermore, the lower corrector $(\sigma^0,\theta^0_1,\theta^{0}_2)$ satisfies
\begin{equation}\label{2.3}
\left \{
\begin{array}{lll}
\rho_0(0)\partial_t \theta^0_1 -\partial_{ZZ} \theta^0_1=0,\\
\rho_0(0)\partial_t \theta^0_2 -\partial_{ZZ} \theta^0_2+(u^0_1(t;0)+\theta^0_1)\rho_0(0)\partial_x \theta^0_2 \\
\quad \quad \quad \quad \quad +\rho_0(0)\partial_x u^0_2(t;x,0)\theta^0_1=0,\\
(\theta^0_1,\theta^0_2)|_{Z=0}=(\beta^0_1(t)-u^0_1(t;0),\beta^0_2(t;x)-u^0_2(t;x,0)),\\
(\theta^0_1,\theta^0_2)|_{Z=\infty}=(0,0),\\
(\theta^0_1,\theta^{0}_2)|_{t=0}=(0,0).
\end{array}
\right.
\end{equation}
Here we have used the fact that $\rho^0(t;z)\equiv \rho_0(z)$.

\textbf{(III) The upper corrector $(\sigma^{u,0},\theta^{u,0}_1,\theta^{u,0}_2)$:}

Similarly, we have $\sigma^{u,0}\equiv 0$. Therefore, the upper corrector $(\sigma^{u,0},\theta^{u,0}_1,\theta^{u,0}_2)$ satisfies
\begin{equation}\label{2.4}
\left \{
\begin{array}{lll}
\rho_0(1)\partial_t \theta^{u,0}_1 -\partial_{Z^u Z^u} \theta^{u,0}_1=0,\\
\rho_0(1)\partial_t \theta^{u,0}_2 +(u^0_1(t;1)+\theta^{u,0}_1)\rho_0(1)\partial_x \theta^{u,0}_2 \\
\quad+\rho_0(1)\partial_x u^0_2(t;x,1)\theta^{u,0}_1-\partial_{Z^uZ^u} \theta^{u,0}_2=0,\\
(\theta^{u,0}_1,\theta^{u,0}_2)|_{Z^u=0}=(\beta^1_1(t)-u^0_1(t;1),\beta^1_2(t;x)-u^0_2(t;x,1)),\\
(\theta^{u,0}_1,\theta^{u,0}_2)|_{Z^u=\infty}=(0,0),\\
(\theta^{u,0}_1,\theta^{u,0}_2)|_{t=0}=(0,0).
\end{array}
\right.
\end{equation}

The well-posedness of the above systems are trivial. Note that $\rho_0 \geq c_0>0$, therefore $\theta^0_1,\theta^{u,0}_1$ satisfy the heat equations. whose well-posedness theory is classical. For $\theta^0_2,\theta^{u,0}_2$, they enjoy a linear parabolic system, whose well-posedness theory can be found in \cite{Xin2}, therefore here we do not discuss this topic for details.

In addition, the decay properties (or weighted estimates) of the correctors will be used in Section \ref{sec4} to establish the error bounds for the approximate solutions. The solvability of (\ref{2.3}) and (\ref{2.4}) with the decay estimates of the correctors will be discussed in Appendix \ref{ap}.

\section{Approximate solutions}\label{sec3}

To derive the error bounds for the approximate solutions of Navier-Stokes equations, we modify (\ref{2.1}) to ensure the boundary conditions exactly. This modification also can be found in \cite{Mazzucato,Temam,Temam1,XWang}.

Let $\psi(z)$ be a smooth function such that
\begin{equation}\label{3.0}
\left \{
\begin{array}{lll}
0 \leq \psi(z) \leq 1, \\
\psi(z) \equiv 1,\ z \in[0,\frac{1}{3}],\\
\psi(z) \equiv 0,\ z \in[\frac{1}{2},1].
\end{array}
\right.
\end{equation}
For $z\in[0,1]$, we have $\psi(z)\psi(1-z)\equiv 0$.

Note that there is no boundary layer for density, then we define
\begin{equation}\label{3.1}
\left \{
\begin{array}{lll}
\tilde{u}^{a}_1&:=u^{0}_1(z)+\psi(z)\theta^0_1\left(t;\frac{z}{\sqrt{\varepsilon}}\right)
+\psi(1-z)\theta^{u,0}_1\left(t;\frac{1-z}{\sqrt{\varepsilon}}\right),\\
\tilde{u}^{a}_2&:=u^{0}_2(t;x,z)+\psi(z)\theta^0_2\left(t;x,\frac{z}{\sqrt{\varepsilon}}\right)
+\psi(1-z)\theta^{u,0}_2\left(t;x,\frac{1-z}{\sqrt{\varepsilon}}\right).
\end{array}
\right.
\end{equation}
Recall that $\rho^\varepsilon \equiv \rho_0(z)$, therefore some simple calculations yield that the truncated approximations satisfy
\begin{equation}\label{3.3}
\begin{array}{lll}
\rho_0 \partial_t \tilde{u}^a_1-\varepsilon \partial_{z}^2\tilde{u}^a_1=\rho_0 f_1+A+B,
\end{array}
\end{equation}
\begin{equation}\label{3.4}
\begin{array}{lll}
\rho_0 \partial_t \tilde{u}^a_2-\varepsilon \Delta_{x,z}\tilde{u}^a_2+\rho_0\tilde{u}^a_1\partial_x \tilde{u}^a_2=\rho_0f_2+C+D+E,
\end{array}
\end{equation}
where the remainders are given by

\begin{equation}\label{3.8}
\begin{array}{lll}
A=&\sqrt{\varepsilon}\Big[\psi(z)Z\partial_z\rho_0(0)\partial_t \theta^0_1-\psi(1-z)Z^u\partial_z \rho_0(1)\partial_t \theta^{u,0}_1\\
&-2\psi'(z) \partial_Z\theta^0_1-2\psi'(1-z)\partial_{Z^u} \theta^{u,0}_1\Big],
\end{array}
\end{equation}
\begin{equation}\label{3.9}
\begin{array}{lll}
B&=\varepsilon[-\partial_z^2 u^0_1-\psi''(z)\theta^0_1-\psi''(1-z)\theta^{u,0}_1],
\end{array}
\end{equation}
\begin{equation}\label{3.10}
\begin{array}{lll}
C=&\psi(z)(\psi(z)-1)\rho_0\theta^0_1\partial_x\theta^0_2+\psi(1-z)(\psi(1-z)-1)\rho_0\theta^{u,0}_1\partial_x\theta^{u,0}_2,
\end{array}
\end{equation}
\begin{equation}\label{3.11}
\begin{array}{lll}
D=&\sqrt{\varepsilon}\Bigg[\psi(z)Z\bigg(\partial_z \rho_0(0)\partial_t \theta^0_2+\partial_z(\rho_0u^0_1)(t;0)\partial_x \theta^0_2\\
&+\partial_z(\rho_0\partial_x u^0_2)(t;x,0)\theta^0_1+\partial_z\rho_0(0)\theta^0_1\partial_x \theta^0_2\bigg)\\
&-\psi(1-z)Z^u\bigg(\partial_z \rho_0(1)\partial_t \theta^{u,0}_2+\partial_z(\rho_0 u^0_1)(t;1)\partial_x \theta^{u,0}_2\\
&+\partial_z(\rho_0\partial_x u^0_2)(t;x,1)\theta^{u,0}_1+\partial_z \rho_0(1)\theta^{u,0}_1\partial_x \theta^{u,0}_2\bigg)\\
&-2\psi'(z) \partial_Z \theta^0_2-2\psi'(1-z) \partial_{Z^u} \theta^{u,0}_2\Bigg],
\end{array}
\end{equation}
\begin{equation}\label{3.12}
\begin{array}{lll}
E=&\varepsilon\Big(-\psi(z)\partial_x^2 \theta^0_2-\psi(1-z) \partial_x^2 \theta^{u,0}_2\\
 &-\partial_x^2 u^0_2 -\partial_z^2 u^0_2-\psi''(z)\theta^0_2-\psi''(1-z)\theta^{u,0}_2\Big).
\end{array}
\end{equation}

The corresponding initial and boundary conditions are given as follows
\begin{equation}\label{3.13}
\left \{
\begin{array}{lll}
\tilde{u}^a|_{t=0}=u_0,\\
\tilde{u}^a|_{z=i}=\alpha^i, \ i=0,1.
\end{array}
\right.
\end{equation}

Every term of the approximate solution $\tilde{u}^a$ is determined by the Euler equations (\ref{2.21}) and the Prandtl-type effective equations (\ref{2.3})-(\ref{2.4}), therefore the approximate solution is well-defined.

\section{Proof of Theorem \ref{thm1}: Error estimates and convergence rates}\label{sec4}

In this section, we will prove our main result, i.e., the energy estimates of the error solutions between the approximation $\tilde{u}^a$ and the exact solution of the Navier-Stokes equations $u^\varepsilon$, which will give the convergence rates of the desired estimates.

We denote the error solutions by
\begin{equation}\label{4.1}
\left \{
\begin{array}{lll}
u^{err}_1(t;z)=u^\varepsilon_1(t;z)-\tilde{u}^a_1(t;z),\\
u^{err}_2(t;x,z)=u^\varepsilon_2(t;x,z)-\tilde{u}^a_2(t;x,z),
\end{array}
\right.
\end{equation}
then we obtain
\begin{equation}\label{4.2}
\left \{
\begin{array}{lll}
\rho_0 \partial_t u^{err}_1-\varepsilon \partial_{zz} u^{err}_1=-(A+B),\\
\rho_0 \partial_t u^{err}_2 -\varepsilon \Delta_{x,z} u^{err}_2+\rho_0(u^{err}_1\partial_x \tilde{u}^a_2+u^\varepsilon_1\partial_x u^{err}_2)=-(C+D+E),
\end{array}
\right.
\end{equation}
with the following boundary and inial conditions
\begin{equation}\label{4.3}
\left \{
\begin{array}{lll}
u^{err}|_{z=i}=0, \ i=0,1,\\
u^{err}|_{t=0}=0,
\end{array}
\right.
\end{equation}
where $A,B,C,D,E$ are defined in Section \ref{sec3}.

As we introduced in above sections, the existence and regularity of $(\rho^\varepsilon,u^\varepsilon)$, defined in (\ref{1.62})-(\ref{1.64}) and $(\tilde{\rho}^a,\tilde{u}^a)$ well-definded by (\ref{3.1}), ensure that of $u^{err}$. Therefore, it is enough to derive the uniform bound, independent of $\varepsilon$, to complete Theorem \ref{thm1}.

Before proving our main result, we introduce the anisotropic Sobolev inequality that will be used in the proof of Theorem \ref{thm1}. See \cite{Mazzucato} for instance.
\begin{lemma}\label{lemma4.1}{\bf (\cite{Mazzucato})}
There holds that
\begin{equation}\label{4.4}
\begin{aligned}
\Vert u \Vert_{L^\infty((0,T) \times \Omega)} &\leq C \big(\Vert u \Vert_{L^\infty(0,T;L^2(\Omega))}^{\frac{1}{2}}\Vert \partial_z u \Vert_{L^\infty(0,T;L^2(\Omega))}^{\frac{1}{2}}\\
&\quad +\Vert \partial_z u \Vert_{L^\infty(0,T;L^2(\Omega))}^{\frac{1}{2}}\Vert \partial_x u \Vert_{L^\infty(0,T;L^2(\Omega))}^{\frac{1}{2}} \\
&\quad + \Vert u \Vert_{L^\infty(0,T;L^2(\Omega))}^{\frac{1}{2}}\Vert \partial_x \partial_z u \Vert_{L^\infty(0,T;L^2(\Omega))}^{\frac{1}{2}}\big),
\end{aligned}
\end{equation}
for all $u \in H^1_0(\Omega)$. It is pointed out that the left-hand sides of the inequality could be infinite.
\end{lemma}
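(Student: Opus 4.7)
The plan is to establish the inequality pointwise in $t$ and then take supremum in time to recover the space-time $L^\infty$ form on the left-hand side. Since the functions to which this lemma is applied in the paper (notably $u^{err}_2(t;x,z)$) do not depend on $y$, I may reduce to the two-dimensional setting on $[0,L]\times[0,1]$, with $u$ periodic in $x$ of period $L$ and $u(x,0)=u(x,1)=0$; indeed, for genuinely $y$-dependent $u\in H^1_0(\Omega)$ the inequality as stated would fail on examples such as a narrow $y$-bump times a fixed $\phi(z)$, since the right-hand side carries no $\partial_y$ information. The central idea is to decompose
\[
u(x,z)=\bar u(z)+\tilde u(x,z),\qquad \bar u(z):=\frac{1}{L}\int_0^L u(x,z)\,dx,
\]
so that $\tilde u(\cdot,z)$ has zero $x$-mean. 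Then $\|u\|_{L^\infty}\leq\|\bar u\|_{L^\infty}+\|\tilde u\|_{L^\infty}$, and the three summands on the right of \eqref{4.4} will come from these two pieces.

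For the mean $\bar u$, the Dirichlet condition $\bar u(0)=0$ gives $\bar u^2(z)=2\int_0^z \bar u\,\bar u'\,d\zeta$, so Cauchy--Schwarz yields
\[
\|\bar u\|_{L^\infty_z}^2\leq 2\|\bar u\|_{L^2_z}\|\bar u'\|_{L^2_z}.
\]
Using $|\bar u(z)|^2\leq L^{-1}\int u^2(\cdot,z)\,dx$ (Cauchy--Schwarz in $x$) and the analogous bound for $\bar u'$, one obtains $\|\bar u\|_{L^\infty}\leq C\|u\|_{L^2}^{1/2}\|\partial_z u\|_{L^2}^{1/2}$, which is exactly the first term of \eqref{4.4}.

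For the fluctuation $\tilde u$ I would apply Agmon twice. Using $\tilde u(x,0)=0$, the one-dimensional Sobolev inequality in $z$ gives $\tilde u^2(x,z)\leq 2\|\tilde u(x,\cdot)\|_{L^2_z}\|\partial_z\tilde u(x,\cdot)\|_{L^2_z}$. Next, since $\tilde u(\cdot,z)$ has zero $x$-mean on the $L$-torus (and likewise $\partial_z\tilde u(\cdot,z)$), the mean-zero Agmon on the torus gives $\tilde u^2(x,z)\leq 2\|\tilde u(\cdot,z)\|_{L^2_x}\|\partial_x\tilde u(\cdot,z)\|_{L^2_x}$; integrating in $z$ yields $\|\tilde u(x,\cdot)\|_{L^2_z}^2\leq 2\|\tilde u\|_{L^2}\|\partial_x\tilde u\|_{L^2}$ uniformly in $x$, and the identical argument applied to $\partial_z\tilde u$ yields $\|\partial_z\tilde u(x,\cdot)\|_{L^2_z}^2\leq 2\|\partial_z\tilde u\|_{L^2}\|\partial_x\partial_z\tilde u\|_{L^2}$. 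Chaining these estimates,
\[
\|\tilde u\|_{L^\infty}^2\leq 4\bigl(\|\tilde u\|\,\|\partial_x\tilde u\|\,\|\partial_z\tilde u\|\,\|\partial_x\partial_z\tilde u\|\bigr)^{1/2}.
\]

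The final step is the AM-GM repackaging $(abcd)^{1/4}\leq \tfrac12(\sqrt{ab}+\sqrt{cd})$, applied with the specific pairing $a=\|\tilde u\|$, $b=\|\partial_x\partial_z\tilde u\|$, $c=\|\partial_x\tilde u\|$, $d=\|\partial_z\tilde u\|$. Combined with $\|\tilde u\|\leq\|u\|$, $\|\partial_x\tilde u\|=\|\partial_x u\|$, $\|\partial_z\tilde u\|\leq\|\partial_z u\|$, and $\|\partial_x\partial_z\tilde u\|=\|\partial_x\partial_z u\|$, this produces the remaining two terms on the right of \eqref{4.4}. The main subtlety, and the one place where a wrong choice would spoil the argument, is precisely this pairing: the alternative grouping $(\|\tilde u\|,\|\partial_x\tilde u\|)$ versus $(\|\partial_z\tilde u\|,\|\partial_x\partial_z\tilde u\|)$ would introduce $\|\partial_x u\|^{1/2}\|\partial_x\partial_z u\|^{1/2}$, which is absent from the claimed right-hand side. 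Once this is arranged and the mean and fluctuation parts are recombined, taking $\sup_{t\in(0,T)}$ of all $L^2(\Omega)$ norms that appear finishes the space-time version \eqref{4.4}.
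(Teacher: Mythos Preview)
The paper does not supply its own proof of this lemma; it merely quotes the inequality from \cite{Mazzucato} as a known anisotropic Sobolev inequality. Your argument is a correct and self-contained proof of the two-dimensional (i.e., $y$-independent) version, which is precisely the setting in which the lemma is actually invoked in the paper (for $u^{err}_2(t;x,z)$). The mean--fluctuation splitting, the iterated one-dimensional Agmon inequalities, and the AM--GM pairing you choose all go through as written; in particular your remark that the alternative pairing would produce the wrong cross-term is exactly the point one must watch. Your side observation that the inequality, read literally for all $u\in H^1_0(\Omega)$ on $\Omega=\mathbb{T}^2\times[0,1]$, fails for genuinely $y$-dependent functions (a narrow $y$-bump times a fixed profile in $z$) is also correct and is a useful caveat on the statement as printed.
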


Now we are on the position to prove Theorem \ref{thm1}.
\begin{proof}[Proof of Theorem \ref{thm1}]
We will prove Theorem \ref{thm1} by the following steps.

\textbf{Step 1: Estimates for $u^{err}_1$.}

Multiplying (\ref{4.2})$_1$ by $u^{err}_1$, note that $\rho_0$ is time-independent, we apply integrating by parts over $\Omega$ to get that
\begin{equation}\label{4.5}
\begin{aligned}
\frac{1}{2}&\frac{\mathrm{d}}{\mathrm{d}t} \Vert \sqrt{\rho_0}u^{err}_1 \Vert_{L^2(0,1)}^2+\varepsilon \Vert \partial_z u^{err}_1 \Vert_{L^2(0,1)}^2\\
=&-\int_0^1 (A+B)u^{err}_1 \mathrm{d}z\\
\leq &\frac{2}{\sqrt{c_0}}\int_{1/3}^{2/3} \left|\sqrt{\varepsilon}(\partial_Z\theta^0_1+\partial_{Z^u}\theta^{u,0}_1)+\varepsilon (\theta^0_1+\theta^{u,0}_1)\right||\sqrt{\rho_0}u^{err}_1|\mathrm{d}z\\
&+\frac{\varepsilon}{\sqrt{c_0}} \Vert u^0_1\Vert_{H^2(0,1)}\Vert \sqrt{\rho_0}u^{err}_1\Vert_{L^2(0,1)}\\
&+\frac{C\varepsilon^{3/4}}{\sqrt{c_0}}\Vert \partial_z \rho_0\Vert_{L^\infty}\Vert \sqrt{\rho_0}u^{err}_1\Vert_{L^2(0,1)}\\
&\times\left(\Vert \langle Z\rangle \partial_t \theta^0_1\Vert_{L^2(0,\infty)}+\Vert \langle Z^u\rangle \partial_t \theta^{u,0}_1\Vert_{L^2(0,\infty)}\right)\\
\leq &C \Vert \sqrt{\rho_0}u^{err}_1\Vert_{L^2(0,1)}\bigg[\varepsilon^{7/4}\bigg(\Vert \langle Z\rangle^2\partial_Z\theta^0_1\Vert_{L^2(0,\infty)}\\
&+\Vert \langle Z^u\rangle^2\partial_{Z^u}\theta^{u,0}_1\Vert_{L^2(0,\infty)}\bigg)\\
&+\varepsilon\left(\Vert \theta^0_1\Vert_{L^2(0,\infty)}+\Vert \theta^{u,0}\Vert_{L^2(0,\infty)}+\Vert u^0_1\Vert_{H^2(0,1)}\right)\\
&+\varepsilon^{3/4}\left(\Vert \langle Z\rangle \partial_t \theta^0_1\Vert_{L^2(0,\infty)}+\Vert \langle Z^u\rangle \partial_t \theta^{u,0}_1\Vert_{L^2(0,\infty)}\right)\bigg],
\end{aligned}
\end{equation}
where some terms of the right-hand side of the last inequality can be estimated by the following way:
\begin{equation}\nonumber
\begin{aligned}
\int_{\frac{1}{3}}^{\frac{2}{3}}\left|\partial_Z\theta^0_1 u^{err}_1 \right|\mathrm{d}z\leq &C\varepsilon \Vert \sqrt{\rho_0}u^{err}_1\Vert_{L^2(0,1)}\left(\int_{\frac{1}{3\sqrt{\varepsilon}}}^{\frac{2}{3\sqrt{\varepsilon}}} \langle Z\rangle^4|\partial_Z\theta^0_1|^2\sqrt{\varepsilon}\mathrm{d}Z\right)^{1/2}\\
\leq &C \varepsilon^{\frac{5}{4}} \Vert \sqrt{\rho_0} u^{err}_1\Vert_{L^2(0,1)}\Vert \langle Z \rangle^2\partial_Z \theta^0_1\Vert_{L^2(0,\infty)},
\end{aligned}
\end{equation}
in which we have used the fact that $\rho_0\geq c_0>0$.

Applying the Young inequality and Gronwall inequality, we get that
\begin{equation}\label{4.6}
\begin{aligned}
\Vert\sqrt{\rho_0}u^{err}_1\Vert_{L^\infty(0,T;L^2(0,1))}+\sqrt{\varepsilon}\Vert \partial_z u^{err}_1\Vert_{L^2(0,T;L^2(0,1))} \leq  C\varepsilon^{\frac{3}{4}},
\end{aligned}
\end{equation}
which gives that
\begin{equation}\label{4.7}
\begin{aligned}
\Vert u^{err}_1\Vert_{L^\infty(0,T;L^2(0,1))} \leq  C\varepsilon^{\frac{3}{4}},
\end{aligned}
\end{equation}
here we have used the fact that $\rho_0\geq c_0>0$ again.

To obtain the estimate for $\partial_z u^{err}_1$, recall that $\rho_0\geq c_0>0$, then we rewrite (\ref{4.2})$_1$ as
\begin{equation}\label{4.8}
 \partial_t u^{err}_1-\frac{\varepsilon }{\rho_0}\partial_{zz} u^{err}_1=-\frac{(A+B)}{\rho_0}.\\
\end{equation}
Multiplying (\ref{4.8}) by $-\partial_z^2 u^{err}_1$, integrating by parts in $\Omega,$ one has
\begin{equation}\label{4.9}
\begin{aligned}
\frac{1}{2}&\frac{\mathrm{d}}{\mathrm{d}t} \Vert \partial_z u^{err}_1 \Vert_{L^2(0,1)}^2+\varepsilon \left\Vert \frac{\partial_z^2 u^{err}_1}{\sqrt{\rho_0}} \right\Vert_{L^2(0,1)}^2\\
=&\int_0^1 \frac{(A+B)\partial_z^2u^{err}_1}{\rho_0} \mathrm{d}z\\
\leq &C \varepsilon^{3/4}\left\Vert \frac{\partial_z^2u^{err}_1}{\sqrt{\rho_0}}\right\Vert_{L^2(0,1)}\\
&\times \bigg[\bigg(\Vert \langle Z\rangle^2\partial_Z\theta^0_1\Vert_{L^2(0,\infty)}+\Vert \langle Z^u\rangle^2\partial_{Z^u}\theta^{u,0}_1\Vert_{L^2(0,\infty)}\bigg)\\
&+\left(\Vert \theta^0_1\Vert_{L^2(0,\infty)}+\Vert \theta^{u,0}\Vert_{L^2(0,\infty)}+\Vert u^0_1\Vert_{H^2(0,1)}\right)\\
&+\left(\Vert \langle Z\rangle \partial_t \theta^0_1\Vert_{L^2(0,\infty)}+\Vert \langle Z^u\rangle \partial_t \theta^{u,0}_1\Vert_{L^2(0,\infty)}\right)\bigg],
\end{aligned}
\end{equation}
by the Young inequality and Gronwall inequality, we have
\begin{equation}\label{4.10}
\begin{aligned}
\Vert \partial_z u^{err}_1\Vert_{L^\infty(0,T;L^2(0,1))}+\sqrt{\varepsilon}\left\Vert \frac{\partial_z^2 u^{err}_1}{\sqrt{\rho_0}}\right\Vert_{L^2(0,T;L^2(0,1))} \leq C\varepsilon^{1/4}.
\end{aligned}
\end{equation}

Therefore, we deduce that
\begin{equation}\label{4.11}
\begin{aligned}
\Vert  u^{err}_1\Vert_{L^\infty(0,T;L^2(0,1))} &\leq C\varepsilon^{3/4},\\
\Vert  u^{err}_1\Vert_{L^\infty(0,T;H^1(0,1))} &\leq C\varepsilon^{1/4},\\
\Vert u^{err}_1 \Vert_{L^\infty ((0,R)\times (0,1))}&\leq \Vert  u^{err}_1\Vert_{L^\infty(0,T;L^2(0,1))}^{1/2}\Vert  u^{err}_1\Vert_{L^\infty(0,T;H^1(0,1))}^{1/2}\\
&\leq C \sqrt{\varepsilon}.
\end{aligned}
\end{equation}

\textbf{Step 2: Estimates for $u^{err}_2$.}

Multiplying (\ref{4.2})$_2$ by $u^{err}_2$, note that $\rho_0$ is time-independent, we apply integrating by parts over $\Omega$ to get that
\begin{equation}\label{4.12}
\begin{aligned}
\frac{1}{2}&\frac{\mathrm{d}}{\mathrm{d}t} \Vert \sqrt{\rho_0}u^{err}_2 \Vert_{L^2(\Omega)}^2+\varepsilon \Vert \nabla_{x,z} u^{err}_2 \Vert_{L^2(\Omega)}^2\\
=&-\int_\Omega \rho_0u^{err}_1\partial_x \tilde{u}^a_2u^{err}_2-\int_\Omega (C+D+E)u^{err}_2\\
=&:I_1+I_2+I_3+I_4.
\end{aligned}
\end{equation}
We estimate every $I_i(i=1,2,3,4)$ as follows.
\begin{equation}\label{4.13}
\begin{aligned}
I_1 \leq &\frac{C\varepsilon^{3/4}}{\sqrt{c_0}}\Vert \rho_0\Vert_{L^\infty}\Vert \sqrt{\rho_0}u^{err}_2 \Vert_{L^2(\Omega)}\\
&\cdot\left(\Vert \partial_x u^0_2\Vert_{L^\infty}+\Vert \partial_x \theta^0_2\Vert_{L^\infty(\Omega_\infty)}+\Vert \partial_x \theta^{u,0}_2\Vert_{L^\infty(\Omega_\infty)}\right)\\
\leq &C\varepsilon^{3/4}\Vert \sqrt{\rho_0}u^{err}_2 \Vert_{L^2(\Omega)},
\end{aligned}
\end{equation}
\begin{equation}\label{4.14}
\begin{aligned}
I_2 \leq &\frac{C\varepsilon}{\sqrt{c_0}}\Vert \rho_0\Vert_{L^\infty}\Vert \sqrt{\rho_0}u^{err}_2 \Vert_{L^2(\Omega)}\\
&\cdot\left(\Vert \theta^0_1\Vert_{L^\infty}\Vert\langle Z\rangle^2\partial_x \theta^0_2\Vert_{L^2}+\Vert \theta^{u,0}_1\Vert_{L^\infty}\Vert\langle Z^u\rangle^2\partial_x \theta^{u,0}_2\Vert_{L^2}\right)\\
\leq &C\varepsilon\Vert \sqrt{\rho_0}u^{err}_2 \Vert_{L^2(\Omega)},
\end{aligned}
\end{equation}
\begin{equation}\label{4.15}
\begin{aligned}
I_3 \leq&\frac{C\varepsilon^{3/4}}{\sqrt{c_0}}\Vert \sqrt{\rho_0}u^{err}_2\Vert_{L^2(\Omega)}\bigg[\Vert \partial_z \rho_0\Vert_{L^\infty}\Vert \langle Z\rangle\partial_t \theta^0_2\Vert_{L^2}\\
&+(\Vert\partial_z\rho_0\Vert_{L^\infty}\Vert u^0_1\Vert_{L^\infty}+\Vert \rho_0\Vert_{L^\infty}\Vert \partial_z u^0_1\Vert_{L^\infty})\Vert \langle Z\rangle \partial_x \theta^0_2\Vert_{L^2}\\
&+(\Vert \partial_z\rho_0\Vert_{L^\infty}\Vert \partial_x u^0_2\Vert_{L^\infty}+\Vert \rho_0\Vert_{L^\infty}\Vert \partial_{zx}u^0_2\Vert_{L^\infty})\Vert \langle Z\rangle\theta^0_1\Vert_{L^2}\\
&+\Vert \partial_z\rho_0\Vert_{L^\infty}\Vert \theta^0_1\Vert_{L^\infty}\Vert \langle Z\rangle \partial_x \theta^0_2\Vert_{L^2}+\Vert \partial_z \rho_0\Vert_{L^\infty}\Vert \langle Z^u\rangle\partial_t \theta^{u,0}_2\Vert_{L^2}\\
&+(\Vert\partial_z\rho_0\Vert_{L^\infty}\Vert u^0_1\Vert_{L^\infty}+\Vert \rho_0\Vert_{L^\infty}\Vert \partial_z u^0_1\Vert_{L^\infty})\Vert \langle Z^u\rangle \partial_x \theta^{u,0}_2\Vert_{L^2}\\
&+(\Vert \partial_z\rho_0\Vert_{L^\infty}\Vert \partial_x u^0_2\Vert_{L^\infty}+\Vert \rho_0\Vert_{L^\infty}\Vert \partial_{zx}u^0_2\Vert_{L^\infty})\Vert \langle Z^u\rangle\theta^{u,0}_1\Vert_{L^2}\\
&+\Vert \partial_z\rho_0\Vert_{L^\infty}\Vert \theta^{u,0}_1\Vert_{L^\infty}\Vert \langle Z^u\rangle \partial_x \theta^{u,0}_2\Vert_{L^2}+\Vert \partial_Z \theta^0_2\Vert_{L^2}+\Vert \partial_{Z^u} \theta^{u,0}_2\Vert_{L^2}\bigg],
\end{aligned}
\end{equation}
\begin{equation}\label{4.16}
\begin{aligned}
I_4 \leq&\frac{C\varepsilon}{\sqrt{c_0}}\Vert \sqrt{\rho_0}u^{err}_2\Vert_{L^2(\Omega)}\big(\Vert \partial_x^2\theta^0_2\Vert_{L^2}+\Vert \partial_x^2\theta^{u,0}_2\Vert_{L^2}+\Vert u^0_2\Vert_{H^2}\\
&+\Vert \theta^0_2\Vert_{L^2}+\Vert \theta^{u,0}_2\Vert_{L^2}\big).
\end{aligned}
\end{equation}
Putting the above estimates into (\ref{4.12}), using the Young inequality and Gronwall inequality, we have
\begin{equation}\label{4.17}
\begin{aligned}
\Vert \sqrt{\rho_0}u^{err}_2 \Vert_{L^\infty(0,T;L^2(\Omega))}+\sqrt{\varepsilon} \Vert \nabla_{x,z} u^{err}_2 \Vert_{L^2(0,T;L^2(\Omega))} \leq C\varepsilon^{3/4},
\end{aligned}
\end{equation}
which give from $\rho_0\geq c_0>0$ that
\begin{equation}\label{4.18}
\begin{aligned}
\Vert u^{err}_2 \Vert_{L^\infty(0,T;L^2(\Omega))} \leq C\varepsilon^{3/4}.
\end{aligned}
\end{equation}

Similarly, multiplying (\ref{4.2})$_2$ by $-\partial_{xx}u^{err}_2$, integrating by parts over $\Omega$, one has
\begin{equation}\label{4.19}
\begin{aligned}
\frac{1}{2}&\frac{\mathrm{d}}{\mathrm{d}t} \Vert \sqrt{\rho_0}\partial_x u^{err}_2 \Vert_{L^2(\Omega)}^2+\varepsilon \Vert \nabla_{x,z} \partial_x u^{err}_2 \Vert_{L^2(\Omega)}^2\\
=&\int_\Omega \rho_0u^{err}_1\partial_x^2 \tilde{u}^a_2\partial_x u^{err}_2-\int_\Omega \partial_x (C+D+E)\partial_xu^{err}_2\\
=&:I_5+I_6+I_7+I_8.
\end{aligned}
\end{equation}
We estimate every $I_i(i=5,6,7,8)$ as follows.
\begin{equation}\label{4.20}
\begin{aligned}
I_5 \leq &\frac{C\varepsilon^{3/4}}{\sqrt{c_0}}\Vert \rho_0\Vert_{L^\infty}\Vert \sqrt{\rho_0}\partial_x u^{err}_2 \Vert_{L^2(\Omega)}\\
&\cdot\left(\Vert \partial_x^2 u^0_2\Vert_{L^\infty}+\Vert \partial_x^2 \theta^0_2\Vert_{L^\infty(\Omega_\infty)}+\Vert \partial_x^2 \theta^{u,0}_2\Vert_{L^\infty(\Omega_\infty)}\right)\\
\leq &C\varepsilon^{3/4}\Vert \sqrt{\rho_0}\partial_x u^{err}_2 \Vert_{L^2(\Omega)},
\end{aligned}
\end{equation}
\begin{equation}\label{4.21}
\begin{aligned}
I_6 \leq &\frac{C\varepsilon}{\sqrt{c_0}}\Vert \rho_0\Vert_{L^\infty}\Vert \sqrt{\rho_0}\partial_x u^{err}_2 \Vert_{L^2(\Omega)}\\
&\cdot\left(\Vert \theta^0_1\Vert_{L^\infty}\Vert\langle Z\rangle^2\partial_x^2 \theta^0_2\Vert_{L^2}+\Vert \theta^{u,0}_1\Vert_{L^\infty}\Vert\langle Z^u\rangle^2\partial_x^2 \theta^{u,0}_2\Vert_{L^2}\right)\\
\leq &C\varepsilon\Vert \sqrt{\rho_0}\partial_x u^{err}_2 \Vert_{L^2(\Omega)},
\end{aligned}
\end{equation}
\begin{equation}\label{4.22}
\begin{aligned}
I_7 \leq&\frac{C\varepsilon^{3/4}}{\sqrt{c_0}}\Vert \sqrt{\rho_0}\partial_x u^{err}_2\Vert_{L^2(\Omega)}\bigg[\Vert \partial_z \rho_0\Vert_{L^\infty}\Vert \langle Z\rangle\partial_t \partial_x \theta^0_2\Vert_{L^2}\\
&+(\Vert\partial_z\rho_0\Vert_{L^\infty}\Vert u^0_1\Vert_{L^\infty}+\Vert \rho_0\Vert_{L^\infty}\Vert \partial_z u^0_1\Vert_{L^\infty})\Vert \langle Z\rangle \partial_x^2 \theta^0_2\Vert_{L^2}\\
&+(\Vert \partial_z\rho_0\Vert_{L^\infty}\Vert \partial_{xx} u^0_2\Vert_{L^\infty}+\Vert \rho_0\Vert_{L^\infty}\Vert \partial_{zxx}u^0_2\Vert_{L^\infty})\Vert \langle Z\rangle\theta^0_1\Vert_{L^2}\\
&+\Vert \partial_z\rho_0\Vert_{L^\infty}\Vert \theta^0_1\Vert_{L^\infty}\Vert \langle Z\rangle \partial_x^2 \theta^0_2\Vert_{L^2}+\Vert \partial_z \rho_0\Vert_{L^\infty}\Vert \langle Z^u\rangle\partial_t\partial_x \theta^{u,0}_2\Vert_{L^2}\\
&+(\Vert\partial_z\rho_0\Vert_{L^\infty}\Vert u^0_1\Vert_{L^\infty}+\Vert \rho_0\Vert_{L^\infty}\Vert \partial_z u^0_1\Vert_{L^\infty})\Vert \langle Z^u\rangle \partial_x^2 \theta^{u,0}_2\Vert_{L^2}\\
&+(\Vert \partial_z\rho_0\Vert_{L^\infty}\Vert \partial_x^2 u^0_2\Vert_{L^\infty}+\Vert \rho_0\Vert_{L^\infty}\Vert \partial_{zxx}u^0_2\Vert_{L^\infty})\Vert \langle Z^u\rangle\theta^{u,0}_1\Vert_{L^2}\\
&+\Vert \partial_z\rho_0\Vert_{L^\infty}\Vert \theta^{u,0}_1\Vert_{L^\infty}\Vert \langle Z^u\rangle \partial_x^2 \theta^{u,0}_2\Vert_{L^2}+\Vert \partial_{Z}\partial_x \theta^0_2\Vert_{L^2}+\Vert \partial_{Z^u} \partial_x \theta^{u,0}_2\Vert_{L^2}\bigg],
\end{aligned}
\end{equation}
\begin{equation}\label{4.22}
\begin{aligned}
I_8 \leq&\frac{C\varepsilon}{\sqrt{c_0}}\Vert \sqrt{\rho_0}\partial_xu^{err}_2\Vert_{L^2(\Omega)}\big(\Vert \partial_x^3\theta^0_2\Vert_{L^2}+\Vert \partial_x^3\theta^{u,0}_2\Vert_{L^2}+\Vert u^0_2\Vert_{H^3}\\
&+\Vert \partial_x\theta^0_2\Vert_{L^2}+\Vert \partial_x\theta^{u,0}_2\Vert_{L^2}\big).
\end{aligned}
\end{equation}
Putting the above estimates into (\ref{4.19}), using the Young inequality and Gronwall inequality, we have
\begin{equation}\label{4.23}
\begin{aligned}
\Vert \sqrt{\rho_0}\partial_xu^{err}_2 \Vert_{L^\infty(0,T;L^2(\Omega))}+\sqrt{\varepsilon} \Vert \nabla_{x,z} \partial_x u^{err}_2 \Vert_{L^2(0,T;L^2(\Omega))} \leq C\varepsilon^{3/4},
\end{aligned}
\end{equation}
which gives from $\rho_0\geq c_0>0$ that
\begin{equation}\label{4.24}
\begin{aligned}
\Vert \partial_x u^{err}_2 \Vert_{L^\infty(0,T;L^2(\Omega))} \leq C\varepsilon^{3/4}.
\end{aligned}
\end{equation}

To obtain the estimate for $\partial_z u^{err}_2$, we find that
\begin{equation}\label{4.25}
\begin{aligned}
 \partial_t u^{err}_2 -\frac{\varepsilon}{\rho_0} \Delta_{x,z} u^{err}_2+(u^{err}_1\partial_x \tilde{u}^a_2+u^\varepsilon_1\partial_x u^{err}_2)=-\frac{1}{\rho_0}(C+D+E).
\end{aligned}
\end{equation}
Multiplying (\ref{4.25}) by $-\partial_{zz}u^{err}_2$, integrating by parts over $\Omega$ to yield that
\begin{equation}\label{4.26}
\begin{aligned}
\frac{1}{2}\frac{\mathrm{d}}{\mathrm{d}t}&\Vert \partial_z u^{err}_2\Vert_{L^2}^2+\varepsilon\left\Vert \frac{1}{\sqrt{\rho_0}}\nabla_{x,z}\partial_z u^{err}_2\right\Vert_{L^2}^2\\
=&\varepsilon \int \frac{\partial_z \rho_0}{\rho_0^2}\partial_x u^{err}_2\partial_{xz}u^{err}_2\\
&+\int \frac{1}{\rho_0}(C+D+E) \partial_{zz}u^{err}_2+\int (u^{err}_1\partial_x \tilde{u}^a_2+u^\varepsilon_1\partial_x u^{err}_2)\partial_{zz}u^{err}_2\\
=&:I_9+I_{10}+I_{11}+I_{12}+I_{13}+I_{14}.
\end{aligned}
\end{equation}

Each term $I_i(i=9,10,11,12,13,14)$ can be estimates as follows.
\begin{equation}\label{4.26}
\begin{aligned}
I_9 \leq &\frac{C \varepsilon^{7/4}}{c_0^{3/2}}\Vert \partial_z \rho_0\Vert_{L^\infty}\left\Vert \frac{1}{\sqrt{\rho_0}}\partial_{xz}u^{err}_2\right\Vert_{L^2},
\end{aligned}
\end{equation}
\begin{equation}\label{4.27}
\begin{aligned}
I_{10} \leq &\frac{C\varepsilon}{\sqrt{c_0}}\Vert \rho_0\Vert_{L^\infty}\left\Vert \frac{1}{\sqrt{\rho_0}}\partial_{zz}u^{err}_2\right\Vert_{L^2}\\
&\cdot\left(\Vert \theta^0_1\Vert_{L^\infty}\Vert\langle Z\rangle^2\partial_x \theta^0_2\Vert_{L^2}+\Vert \theta^{u,0}_1\Vert_{L^\infty}\Vert\langle Z^u\rangle^2\partial_x \theta^{u,0}_2\Vert_{L^2}\right),
\end{aligned}
\end{equation}
\begin{equation}\label{4.28}
\begin{aligned}
I_{11} \leq&\frac{C\varepsilon^{3/4}}{\sqrt{c_0}}\left\Vert \frac{1}{\sqrt{\rho_0}}\partial_{zz}u^{err}_2\right\Vert_{L^2}\bigg[\Vert \partial_z \rho_0\Vert_{L^\infty}\Vert \langle Z\rangle\partial_t \theta^0_2\Vert_{L^2}\\
&+(\Vert\partial_z\rho_0\Vert_{L^\infty}\Vert u^0_1\Vert_{L^\infty}+\Vert \rho_0\Vert_{L^\infty}\Vert \partial_z u^0_1\Vert_{L^\infty})\Vert \langle Z\rangle \partial_x \theta^0_2\Vert_{L^2}\\
&+(\Vert \partial_z\rho_0\Vert_{L^\infty}\Vert \partial_x u^0_2\Vert_{L^\infty}+\Vert \rho_0\Vert_{L^\infty}\Vert \partial_{zx}u^0_2\Vert_{L^\infty})\Vert \langle Z\rangle\theta^0_1\Vert_{L^2}\\
&+\Vert \partial_z\rho_0\Vert_{L^\infty}\Vert \theta^0_1\Vert_{L^\infty}\Vert \langle Z\rangle \partial_x \theta^0_2\Vert_{L^2}+\Vert \partial_z \rho_0\Vert_{L^\infty}\Vert \langle Z^u\rangle\partial_t \theta^{u,0}_2\Vert_{L^2}\\
&+(\Vert\partial_z\rho_0\Vert_{L^\infty}\Vert u^0_1\Vert_{L^\infty}+\Vert \rho_0\Vert_{L^\infty}\Vert \partial_z u^0_1\Vert_{L^\infty})\Vert \langle Z^u\rangle \partial_x \theta^{u,0}_2\Vert_{L^2}\\
&+(\Vert \partial_z\rho_0\Vert_{L^\infty}\Vert \partial_x u^0_2\Vert_{L^\infty}+\Vert \rho_0\Vert_{L^\infty}\Vert \partial_{zx}u^0_2\Vert_{L^\infty})\Vert \langle Z^u\rangle\theta^{u,0}_1\Vert_{L^2}\\
&+\Vert \partial_z\rho_0\Vert_{L^\infty}\Vert \theta^{u,0}_1\Vert_{L^\infty}\Vert \langle Z^u\rangle \partial_x \theta^{u,0}_2\Vert_{L^2}+\Vert \partial_Z \theta^0_2\Vert_{L^2}+\Vert \partial_{Z^u} \theta^{u,0}_2\Vert_{L^2}\bigg],
\end{aligned}
\end{equation}
\begin{equation}\label{4.29}
\begin{aligned}
I_{12} \leq&\frac{C\varepsilon}{\sqrt{c_0}}\left\Vert \frac{1}{\sqrt{\rho_0}}\partial_{zz}u^{err}_2\right\Vert_{L^2}\big(\Vert \partial_x^2\theta^0_2\Vert_{L^2}+\Vert \partial_x^2\theta^{u,0}_2\Vert_{L^2}+\Vert u^0_2\Vert_{H^2}\\
&+\Vert \theta^0_2\Vert_{L^2}+\Vert \theta^{u,0}_2\Vert_{L^2}\big),
\end{aligned}
\end{equation}
\begin{equation}\label{4.30}
\begin{aligned}
I_{13} \leq& C\varepsilon^{3/4}\Vert \rho_0\Vert_{L^\infty}^{1/2}\left\Vert \frac{1}{\sqrt{\rho_0}}\partial_{zz}u^{err}_2\right\Vert_{L^2}\\
&\cdot\left(\Vert \partial_x u^0_2\Vert_{L^\infty}+\Vert \partial_x \theta^0_2\Vert_{L^\infty(\Omega_\infty)}+\Vert \partial_x \theta^{u,0}_2\Vert_{L^\infty(\Omega_\infty)}\right),
\end{aligned}
\end{equation}
\begin{equation}\label{4.31}
\begin{aligned}
I_{14} \leq& C\varepsilon^{3/4}\Vert \rho_0\Vert_{L^\infty}^{1/2}\Vert u^\varepsilon_1\Vert_{L^\infty}\left\Vert \frac{1}{\sqrt{\rho_0}}\partial_{zz}u^{err}_2\right\Vert_{L^2}.
\end{aligned}
\end{equation}
Putting the above estimates into (\ref{4.26}), applying the Young inequality and Gronwall inequality, we have
\begin{equation}\label{4.32}
\begin{aligned}
\Vert \partial_z u^{err}_2\Vert_{L^\infty(0,T;L^2(\Omega))}+\sqrt{\varepsilon}\left\Vert \frac{1}{\sqrt{\rho_0}}\nabla_{x,z}\partial_z u^{err}_2\right\Vert_{L^2(0,T;L^2(\Omega))}\leq C\varepsilon^{1/4}.
\end{aligned}
\end{equation}

In the above estimates, the bound with rate $\varepsilon^{1/4}$ can not be improved since we cannot perform any integration by parts for the higher derivatives with the direction with $z$, as $\partial_z u^{err}_2$ man not vanish on boundaries.

Similar arguments give that
\begin{equation}\label{4.33}
\begin{aligned}
\Vert \partial_{xx} u^{err}_2\Vert_{L^\infty(0,T;L^2(\Omega))}\leq C\varepsilon^{3/4},\\
\Vert \partial_{zx} u^{err}_2\Vert_{L^\infty(0,T;L^2(\Omega))}\leq C\varepsilon^{1/4}.
\end{aligned}
\end{equation}

Therefore, we deduce that
\begin{equation}\label{4.34}
\begin{aligned}
\Vert u^{err}_2\Vert_{L^\infty(0,T;H^1(\Omega))} \leq C\varepsilon^{\frac{1}{4}}.
\end{aligned}
\end{equation}

Finally, we use the Lemma \ref{lemma4.1} to get
\begin{equation}\label{4.52}
\begin{aligned}
\Vert u^{err}_2 \Vert_{L^\infty((0,T) \times \Omega)} &\leq C \bigg(\Vert u^{err}_2 \Vert_{L^\infty(0,T;L^2(\Omega))}^{\frac{1}{2}}\Vert \partial_z u^{err}_2 \Vert_{L^\infty(0,T;L^2(\Omega))}^{\frac{1}{2}}\\
&\quad +\Vert \partial_z u^{err}_2 \Vert_{L^\infty(0,T;L^2(\Omega))}^{\frac{1}{2}}\Vert \partial_x u^{err}_2 \Vert_{L^\infty(0,T;L^2(\Omega))}^{\frac{1}{2}} \\
&\quad + \Vert u^{err}_2 \Vert_{L^\infty(0,T;L^2(\Omega))}^{\frac{1}{2}}\Vert \partial_x \partial_z u^{err}_2 \Vert_{L^\infty(0,T;L^2(\Omega))}^{\frac{1}{2}}\bigg)\\
&\leq C\sqrt{\varepsilon}.
\end{aligned}
\end{equation}

Combine the above steps, we have
\begin{equation}\label{4.56}
\begin{aligned}
\Vert u^{err} \Vert_{L^\infty(0,T;L^2(\Omega))} &\leq C\varepsilon^\frac{3}{4},\\
\Vert u^{err} \Vert_{L^\infty(0,T;H^1(\Omega))} &\leq C\varepsilon^\frac{1}{4},\\
\Vert u^{err} \Vert_{L^\infty((0,T) \times \Omega)}& \leq C\sqrt{\varepsilon},
\end{aligned}
\end{equation}
which completes the proof of Theorem \ref{thm1}.
\end{proof}

\appendix
\section{Well-posedness and weighted estimates of the equations for the correctors}\label{ap}

In this Appendix, we discuss the solvability of the Prandtl-type effective equations for the correctors appeared in Section \ref{sec2}. Thanks to the similarity of the lower and upper correctors, we only need to consider the correctors near the boundary $z=0$, i.e., we only focus on $(\theta^0_1,\theta^0_2)$ here. Note that in the equations for the correctors, we work in $(x,Z)\in \Omega_\infty=[0,L]\times [0,\infty)$.

We start with  $\theta^0_1$, which satisfies the following initial boundary value problem for a heat equation
\begin{equation}\label{a.75}
\left \{
\begin{array}{lll}
\rho_0(0)\partial_t \theta^0_1 -\partial_{ZZ} \theta^0_1=0,\\
\theta^0_1|_{Z=0}=\beta^0_1(t)-u^0_1(t;0),\\
\theta^0_1|_{Z=\infty}=0,\\
\theta^0_1|_{t=0}=0,
\end{array}
\right.
\end{equation}
where $\rho_0(z)$ is independent of $t$.

Denote
$$\langle Z \rangle:=\sqrt{1+Z^2}.$$
Recall that
$$\rho_0 \geq c_0>0,$$
therefore the equation for $\theta^0_1$ is non degenerate.

Based on the above facts, we easily derive the following conclusions.

\begin{lemma}\label{lemma6.1}
Suppose $\beta^0_1 \in L^\infty(0,T), u^0_1 \in  L^\infty(0,T;H^m(0,1)),$ and, $\rho_0 \in H^m(0,1),$ where $m>5$. Then for any $l \in \mathbb{N}$, there holds that
\begin{equation}\label{a.76}
\Vert \langle Z \rangle^l \theta^0_1 \Vert_{L^\infty((0,T)\times \Omega_\infty)} \leq C,
\end{equation}
\begin{equation}\label{a.761}
\Vert \langle Z \rangle^l \theta^0_1 \Vert_{L^\infty(0,T;L^2( \Omega_\infty))}+\Vert \langle Z \rangle^l \partial_Z \theta^0_1 \Vert_{L^2(0,T;L^2( \Omega_\infty))}\leq C,
\end{equation}
\begin{equation}\label{a.763}
\Vert \langle Z \rangle^l \partial_Z \theta^0_1 \Vert_{L^\infty(0,T;L^2( \Omega_\infty))}+\Vert \langle Z \rangle^l \partial_t \theta^0_1 \Vert_{L^2(0,T;L^2( \Omega_\infty))}\leq C,
\end{equation}
where the constant $C>0$ depends on
$$\Vert \beta^0_1\Vert_{H^m(0,T)}, \Vert u^0_1 \Vert_{ L^\infty (0,T;H^m(0,1))}, \Vert \rho_0 \Vert_{H^m(0,1)},c_0,l, T,$$
but it is independent of $\varepsilon$.
\end{lemma}
\begin{proof}
Note that $\rho^0(t;x,0) +\sigma^0 \geq c_0 >0$, then the proof of (\ref{a.76}) is straightforward from the maximum principle of the parabolic equations, which is similar to the proof of the Lemma A.1 of \cite{Mazzucato} and we omit the details here.

Set
$$\overline{\theta^0_1}:=\theta^0_1-e^{-Z}(\beta^0_1(t)-u^0_1(t;0)),$$
then we have
\begin{equation}\label{a.751}
\left \{
\begin{array}{lll}
\rho_0(0)\partial_t \overline{\theta^0_1}-\partial_{ZZ} \overline{\theta^0_1}=
e^{-Z}[\partial_t \beta^0_1(t)-(\beta^0_1(t)-u^0_1(t;0))],\\
\overline{\theta^0_1}|_{Z=0,\infty}=0,\\
\overline{\theta^0_1}|_{t=0}=0.
\end{array}
\right.
\end{equation}

Multiplying (\ref{a.751})$_1$ by $\langle Z \rangle^{2l}\overline{\theta^0_1}$, integrating by parts over $\Omega_\infty$ and adding the results to give that
\begin{equation}\label{a.77}
\begin{aligned}
\frac{1}{2} \frac{\mathrm{d}}{\mathrm{d}t} &\int_{\Omega_\infty} \rho_0(0) \langle Z \rangle^{2l} |\overline{\theta^0_1}|^2+\int_{\Omega_\infty} \langle Z \rangle^{2l} |\partial_Z \overline{\theta^0_1}|^2\\
&=\int_{\Omega_\infty} e^{-Z}[\partial_t \beta^0_1(t)-(\beta^0_1(t)-u^0_1(t;0))]\langle Z \rangle^{2l}\overline{\theta^0_1}\\
& \leq C \Vert \sqrt{\rho_0(0)}  \langle Z \rangle^{l} \overline{\theta^0_1}\Vert_{L^2},
\end{aligned}
\end{equation}
where we have used the Sobolev embedding and the fact that $\langle Z \rangle^l e^{-Z}$ is uniformly bounded in $Z$ for any $l$.

Using Young's inequality and Gronwall's inequality, we have
\begin{equation}\label{a.78}
\begin{aligned}
\Vert \langle Z \rangle^l \overline{\theta^0_1} \Vert_{L^\infty(0,T;L^2( \Omega_\infty))}+\Vert \langle Z \rangle^l \partial_Z \overline{\theta^0_1} \Vert_{L^2(0,T;L^2( \Omega_\infty))}\leq C,
\end{aligned}
\end{equation}
in which the fact that $\rho_0 \geq c_0$ has been used. Therefore (\ref{a.761}) follows.

Multiplying (\ref{a.751})$_2$ by $\langle Z \rangle^{2l}\partial_{t}\overline{\theta^0_1}$, integrating by parts on $\Omega_\infty$ to yield that
\begin{equation}\label{a.802}
\begin{aligned}
 \int_{\Omega_\infty} \rho_0(0) &\langle Z \rangle^{2l} |\partial_t \overline{\theta^0_1}|^2+\frac{1}{2} \frac{\mathrm{d}}{\mathrm{d}t} \Vert \langle  Z \rangle^l \partial_Z \overline{\theta^0_1} \Vert_{L^2(\Omega_\infty)}^2\\
 &=\int_{\Omega_\infty} e^{-Z}[\partial_t \beta^0_1(t)-(\beta^0_1(t)-u^0_1(0))]\langle Z \rangle^{2l}\partial_t\overline{\theta^0_1}\\
 &\quad -\int_{\Omega_\infty}\partial_t \overline{\theta^0_1} \partial_Z \overline{\theta^0_1}  \partial_Z(\langle Z \rangle^{2l}) \\
 &\leq C(\Vert \beta^0_1\Vert_{H^m(0,T)}, \Vert u^0_1 \Vert_{H^m(0,1)})\Vert \langle  Z \rangle^l \partial_t \overline{\theta^0_1} \Vert_{L^2(\Omega_\infty)}\\
 &\quad +C\Vert \sqrt{\rho_0(0)} \langle Z\rangle^l \partial_t \theta^0_1 \Vert_{L^2}\Vert  \langle Z \rangle^{l}\partial_Z\overline{\theta^0_1}\Vert_{L^2}.
\end{aligned}
\end{equation}
Using (\ref{a.78}), the Young's inequality and the Gronwall's inequality, one has
\begin{equation}\label{a.803}
\Vert \langle Z \rangle^l \partial_Z \overline{\theta^0_1} \Vert_{L^\infty(0,T;L^2( \Omega_\infty))}+\Vert \langle Z \rangle^l \partial_t \overline{\theta^0_1} \Vert_{L^2(0,T;L^2( \Omega_\infty))}\leq C,
\end{equation}
hence (\ref{a.763}) holds.

This completes the proof.
\end{proof}

Now we consider the same problem for $\theta^0_2$. Consider
\begin{equation}\label{a.81}
\left \{
\begin{array}{lll}
\rho_0(0)\partial_t \theta^0_2 -\partial_{ZZ} \theta^0_2+(u^0_1(t;0)+\theta^0_1)\rho_0(0)\partial_x \theta^0_2 \\
\quad \quad \quad \quad \quad +\rho_0(0)\partial_x u^0_2(t;x,0)\theta^0_1=0,\\
\theta^0_2|_{Z=0}=\beta^0_2(t;x)-u^0_2(t;x,0),\\
\theta^0_2|_{Z=\infty}=0,\\
\theta^0_2|_{t=0}=0.
\end{array}
\right.
\end{equation}

We recall here that $\rho_0 (0) \geq c_0$ for $c_0>0$. The well-posedness of (\ref{a.81}) can be obtained by the similar techniques in Section 4 of \cite{Xin1}. One can also refer to \cite{Mazzucato,Han} for instance.

Our attentions are paid to the weighted estimates of $\theta^0_2$, which are used in Section \ref{sec4}.
\begin{lemma}\label{lemma6.2}
Suppose $\beta^0_2 \in H^2(0,T;H^m(\partial{\Omega})), u^0_2 \in L^\infty(0,T; H^m(\Omega))$, $\rho_0 \in H^m(0,1)$, where $m>5$. Then for any $l \in \mathbb{N}$, there holds that
\begin{equation}\label{a.82}
\Vert \langle Z \rangle^l \partial_x^i \theta^0_2 \Vert_{L^\infty((0,T)\times \Omega_\infty)} \leq C,\ i=0,1,2,
\end{equation}
\begin{equation}\label{a.83}
\Vert \langle Z \rangle^l \partial_x^i \theta^0_2 \Vert_{L^\infty(0,T;L^2( \Omega_\infty))}+\Vert \langle Z \rangle^l \partial_x^i\partial_Z \theta^0_2 \Vert_{L^2(0,T;L^2( \Omega_\infty))}\leq C,\ i=0,1,2,
\end{equation}
\begin{equation}\label{a.84}
\Vert \langle Z \rangle^l \partial_x^i \partial_Z \theta^0_2 \Vert_{L^\infty(0,T;L^2( \Omega_\infty))}+\Vert \langle Z \rangle^l \partial_x^i \partial_t \theta^0_2 \Vert_{L^2(0,T;L^2( \Omega_\infty))}\leq C,\ i=0,1,2,
\end{equation}
where the constant $C>0$ depends on
$$\Vert( \beta^0_1,\beta^0_2) \Vert_{H^2(0,T;H^m(\partial{\Omega}))}, \Vert u^0 \Vert_{L^\infty(0,T; H^m(\Omega))}, \Vert \rho_0 \Vert_{H^m(\Omega)},c_0,l, T,$$
but it is independent of $\varepsilon$.
\end{lemma}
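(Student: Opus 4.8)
The plan is to establish the weighted estimates \eqref{a.82}--\eqref{a.84} for $\theta^0_2$ by a bootstrap on the $x$-derivatives, imitating the argument just completed for $(\sigma^0,\theta^0_1)$ in Lemma~\ref{lemma6.1}, and using all the conclusions of that lemma (in particular \eqref{a.76}, \eqref{a.761}, \eqref{a.7621}, \eqref{a.763}) as known facts, together with the non-degeneracy $\rho^0(t;x,0)+\sigma^0\ge c_0>0$. As in the previous proof I would first homogenize the boundary condition: set
\[
\overline{\theta^0_2}:=\theta^0_2-e^{-Z}\big(\beta^0_2(t;x)-u^0_2(t;x,0)\big),
\]
so that $\overline{\theta^0_2}$ vanishes at $Z=0$ and $Z=\infty$ and satisfies a parabolic equation of the same form with a forcing term that is exponentially decaying in $Z$ and controlled in terms of $\|\beta^0_2\|_{H^2(0,T;H^m)}$, $\|u^0_2\|_{L^\infty(0,T;H^m)}$, plus lower-order terms involving $\theta^0_1$, $\sigma^0$ and $\partial_x u^0_2$, all of which carry weights in $Z$ that are already under control by Lemma~\ref{lemma6.1}.

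The estimate \eqref{a.82} ($L^\infty L^\infty$ with weight) follows from the maximum principle for the parabolic equation satisfied by $\langle Z\rangle^l\partial_x^i\theta^0_2$, exactly as \eqref{a.76} was obtained — the point being that $\langle Z\rangle^l e^{-Z}$ is bounded and the coefficient of $\partial_t$ is $\ge c_0$; for $i=1,2$ one differentiates the equation in $x$ first and treats the newly created terms (e.g.\ $(\partial_x\theta^0_1)(\rho^0+\sigma^0)\partial_x\theta^0_2$, $(\partial_x\sigma^0)(\cdots)$) as lower-order, using the already-established bounds on $\theta^0_1,\sigma^0$ and their $x$-derivatives. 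For \eqref{a.83} I would test the (homogenized) equation for $\partial_x^i\overline{\theta^0_2}$ against $\langle Z\rangle^{2l}\partial_x^i\overline{\theta^0_2}$, integrate by parts in $Z$ (producing the good term $\|\langle Z\rangle^l\partial_x^i\partial_Z\overline{\theta^0_2}\|_{L^2}^2$ plus a commutator $\int \overline{\theta^0_2}\,\partial_Z\overline{\theta^0_2}\,\partial_Z(\langle Z\rangle^{2l})$ absorbed by Young), bound the transport term $\int (u^0_1(0)+\theta^0_1)(\rho^0+\sigma^0)\partial_x^{i+1}\overline{\theta^0_2}\langle Z\rangle^{2l}\partial_x^i\overline{\theta^0_2}$ by integration by parts in $x$, and close with Gronwall — proceeding inductively $i=0,1,2$ so that the $i$-th step uses the $(i{-}1)$-th. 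For \eqref{a.84} I would test against $\langle Z\rangle^{2l}\partial_t\partial_x^i\overline{\theta^0_2}$, which yields the dissipation $\int(\rho^0+\sigma^0)\langle Z\rangle^{2l}|\partial_t\partial_x^i\overline{\theta^0_2}|^2$ on the left and $\tfrac12\tfrac{\mathrm d}{\mathrm dt}\|\langle Z\rangle^l\partial_Z\partial_x^i\overline{\theta^0_2}\|_{L^2}^2$ after integrating the $-\partial_{ZZ}$ term by parts; the transport and zeroth-order terms on the right are paired with $\partial_t\partial_x^i\overline{\theta^0_2}$ and absorbed via Young, again using \eqref{a.83} from the previous sub-step, then Gronwall.

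\textbf{The main obstacle} I anticipate is bookkeeping rather than anything conceptually new: the equation for $\theta^0_2$ has a variable coefficient $\rho^0(t;x,0)+\sigma^0$ in front of both $\partial_t$ and the transport term, so differentiating in $x$ repeatedly spawns a growing collection of products of $\sigma^0$, $\theta^0_1$, $u^0_2$ and their derivatives, and one must check at each stage that every such product is controlled — in the appropriate weighted norm — by Lemma~\ref{lemma6.1} and the Sobolev embedding $H^m\hookrightarrow W^{2,\infty}$ ($m>5$). The one genuinely delicate point is the interplay between the $\sigma^0$-dependent coefficient of $\partial_t\theta^0_2$ and the time-derivative estimate \eqref{a.84}: there one needs $\partial_t\sigma^0\in L^2(0,T;L^2)$ (available by \eqref{a.7550}) when integrating by parts in $t$ or differentiating the coefficient, and one must be careful that the term $\int (\partial_t\sigma^0)\langle Z\rangle^{2l}|\partial_Z\partial_x^i\overline{\theta^0_2}|^2$-type contributions are handled by a Gronwall argument rather than blindly absorbed. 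Since all the needed ingredients are already in place, I expect the proof to be a routine (if lengthy) energy-method argument, and for brevity I would present the $i=0$ case in detail and remark that $i=1,2$ follow by the same scheme after differentiating in $x$, exactly as was done for $\sigma^0$ above.
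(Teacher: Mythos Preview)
Your proposal is correct and follows essentially the same approach that the paper indicates: the paper's proof merely states that the estimates follow by standard energy methods analogous to those in Lemma~\ref{lemma6.1} (and the appendix of \cite{Mazzucato}) and omits the details, which is precisely the scheme you have outlined---homogenize the boundary data, apply the maximum principle for \eqref{a.82}, and test against $\langle Z\rangle^{2l}\partial_x^i\overline{\theta^0_2}$ and $\langle Z\rangle^{2l}\partial_t\partial_x^i\overline{\theta^0_2}$ for \eqref{a.83} and \eqref{a.84}, proceeding inductively in $i$.
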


\begin{proof}
The estimates of this lemma can be obtained by the standard energy methods, which is similar to that of Lemma \ref{lemma6.1} and the Appendix A of \cite{Mazzucato}. We can modify the techneque of \cite{Mazzucato} to complete the proof. The readers can see \cite{Mazzucato} for more details and we omit it here.
\end{proof}

Applying the similar arguments on $(\theta^{u,0}_1,\theta^{u,0}_2),$ we have the following results.
\begin{lemma}
Under the assumptions of Lemma \ref{lemma6.1} and Lemma \ref{lemma6.2}, we have
\begin{equation}\label{a.85}
\Vert \langle Z^u \rangle^l \theta^{u,0}_1 \Vert_{L^\infty((0,T)\times \Omega_\infty)} \leq C,
\end{equation}
\begin{equation}\label{a.851}
\begin{aligned}
\Vert \langle Z^u \rangle^l \theta^{u,0}_1 \Vert_{L^\infty(0,T;L^2( \Omega_\infty))}
+\Vert \langle Z^u \rangle^l \partial_Z \theta^{u,0}_1 \Vert_{L^2(0,T;L^2( \Omega_\infty))}\leq C,
\end{aligned}
\end{equation}
\begin{equation}\label{a.853}
\Vert \langle Z^u \rangle^l \partial_Z \theta^{u,0}_1 \Vert_{L^\infty(0,T;L^2( \Omega_\infty))}+\Vert \langle Z^u \rangle^l \partial_t \theta^{u,0}_1 \Vert_{L^2(0,T;L^2( \Omega_\infty))}\leq C,
\end{equation}
\begin{equation}\label{a.854}
\Vert \langle Z^u \rangle^l \partial_x^i \theta^{u,0}_2 \Vert_{L^\infty((0,T)\times \Omega_\infty)} \leq C,\ i=0,1,2,
\end{equation}
\begin{equation}\label{a.855}
\Vert \langle Z^u \rangle^l \partial_x^i \theta^{u,0}_2 \Vert_{L^\infty(0,T;L^2( \Omega_\infty))}+\Vert \langle Z^u \rangle^l \partial_x^i\partial_Z \theta^{u,0}_2 \Vert_{L^2(0,T;L^2( \Omega_\infty))}\leq C,\ i=0,1,2,
\end{equation}
\begin{equation}\label{a.856}
\Vert \langle Z^u \rangle^l  \partial_x^i \partial_Z \theta^{u,0}_2 \Vert_{L^\infty(0,T;L^2( \Omega_\infty))}+\Vert \langle Z^u \rangle^l \partial_x^i \partial_t \theta^{u,0}_2 \Vert_{L^2(0,T;L^2( \Omega_\infty))}\leq C, \ i=0,1,2,
\end{equation}
where the constant $C>0$ depends on
$$\Vert \beta^0_1\Vert_{H^m(0,T)}, \Vert u^0_1\Vert_{L^\infty(0,T;H^m(0,1))}, \Vert \beta^0_2 \Vert_{H^2(0,T;H^m(\partial{\Omega}))}, \Vert u^0_2 \Vert_{L^\infty(0,T; H^m(\Omega))},c_0,$$
$$\Vert \rho_0 \Vert_{ H^m(0,1)},l, T,$$
but it is independent of $\varepsilon$.
\end{lemma}

\smallskip
{\bf Acknowledgment.}

Ding's research is supported by the National Natural Science Foundation of China (No.11371152, No.11571117, No.11871005 and No.11771155) and Guangdong Provincial Natural Science Foundation (No.2017A030313003). Lin's research is supported by the Innovation Project of Graduate School of South China Normal University (No.2018LKXM009).
Niu's research is supported by the National Natural Science Foundation of China (No.11471220 and No.11871046) while she was visiting at the Institute of Mathematical Sciences of the Chinese University of Hong Kong.

\medskip
\newpage

\end{document}